\newtheorem{theorem}{Theorem}
\newtheorem{corollary}[theorem]{Corollary}
\newtheorem{lemma}{Lemma}
\newtheorem{conjecture}{Conjecture}
\newtheorem{observation}{Observation}
\newtheorem{definition}{Definition}
\theoremstyle{definition}
\newcommand{\beql}[1]{\begin{equation}\label{#1}}
\newcommand{\eeq}{\end{equation}}
\newcommand{\comment}[1]{}
\newcommand{\Abs}[1]{{\left|{#1}\right|}}
\newcommand{\Set}[1]{{\left\{{#1}\right\}}}
\newcommand{\RR}{{\mathbb R}}
\newcommand{\CC}{{\mathbb C}}
\newcommand{\ZZ}{{\mathbb Z}}
\newcommand{\one}{{\bf 1}}
\newcommand{\gen}[1]{{\langle #1 \rangle}}
\newcommand{\modu}{{\rm mod\,}}
\newcommand{\ft}[1]{\widehat{#1}}
\newcounter{rem}
\newcounter{step}
\newcounter{mysec}
\newcounter{mysubsec}[mysec]
\newcounter{othm}
\def\theothm{\Alph{othm}}
\newenvironment{othm}{
  \em
  \vskip 0.10in
  \refstepcounter{othm}
  \noindent{\bf Theorem\ \theothm}
}
\definecolor{ble}{rgb}{0,0.33,0.75}
\begin{document}

\title{Spectra for cubes in products of finite cyclic groups}

\author[E. Agora]{{Elona Agora}}
\author[S. Grepstad]{{Sigrid Grepstad}}
\author[M. Kolountzakis]{{Mihail N. Kolountzakis}}
\address{E.A.: Instituto Argentino de Matem\' atica ``Alberto P. Calder\' on'' (IAM-CONICET), Argentina}
\email{elona.agora@gmail.com}
\address{M.K.: Department of Mathematics and Applied Mathematics, University of Crete, Voutes Campus, GR-700 13, Heraklion, Crete, Greece}
\email{kolount@gmail.com}
\address{S.G.: Institute of Financial Mathematics and Applied Number Theory, Johannes Kepler University Linz, Austria.}
\email{sgrepstad@gmail.com}

\thanks{This work has been partially supported by the ``Aristeia II'' action (Project
FOURIERDIG) of the operational program Education and Lifelong Learning
and is co-funded by the European Social Fund and Greek national resources.}
\thanks{E.A.\ has been partially supported by Grants:  MTM2013-40985-P, 2014SGR289, 
CONICET-PIP 11220110101018, UBACyT 2002013010042BA}

\thanks{S.G.\ is currently supported by the Austrian Science Fund (FWF), Project F5505-N26, which is a part of the Special Research Program ``Quasi-Monte Carlo Methods: Theory and Applications''}

\begin{abstract}
We consider ``cubes'' in products of finite cyclic groups and we study their tiling and spectral properties.
(A set in a finite group is called a tile if some of its translates form a partition of
the group and is called spectral if it admits an orhogonal basis of characters for the functions supported
on the set.)
We show an analog of a theorem due to Iosevich and Pedersen \cite{IP}, Lagarias, Reeds and Wang \cite{LRW},
and the third author of this paper \cite{K00}, which identified the tiling complements of the unit cube in $\RR^d$ with
the spectra of the same cube.
\end{abstract}

\maketitle

\section{Introduction to tilings and spectra}

Let $G$ be a locally compact abelian group equipped with Haar measure,
which is always taken to be the counting measure on discrete groups.
(We will deal exclusively with finite groups in this paper.)
If $A$ and $B$ are two sets in $G$, we write $A+B$ for the set of all sums $a+b$, $a \in A$, $b \in B$. Similarly, we write $A-B$ for the set of all differences $a-b$, $a \in A$, $b \in B$. We denote by $\one_E$ the indicator function for the set $E \subseteq G$. 
\begin{definition}[Packing and tiling]
A nonnegative measurable function $f: G \mapsto \RR$ is said to \emph{pack} $G$
with the set (of translates) $T \subseteq G$ at level $L \ge 0$ if 
\begin{equation*}
\sum_{t \in T} f(x-t) \leq L \quad \text{ for a.e. } x \in G . 
\end{equation*}
We then write ``$f+T$ is packing in $G$ at level $L$'', and if $L$ is omitted we understand it to be equal to 1.

A nonnegative function $f: G \mapsto \RR$ \emph{tiles} $G$ at \emph{level} $L$ with the set $T \subseteq G$ if 
\begin{equation*}
\sum_{t \in T} f(x-t) = L \quad \text{ for a.e. } x \in G . 
\end{equation*}
We write ``$f+T$ tiles $G$ at level $L$'' (and if omitted we understand $L=1$). 
The set $T$ is called {\em a tiling complement} of the {\em tile} $f$.

If $f= \one_E$ for some measurable set $E$, then we write ``$E + T$ is a packing'' (or tiling)
rather than ``$\one_E + T$ is a packing'' (or tiling). 
\end{definition}

Denote by $\widehat{G}$ the dual group of continuous characters on $G$.
\begin{definition}[Spectral sets]
A set $\Lambda \subseteq \widehat{G}$ is called a \emph{spectrum} of a measurable
set $E \subseteq G$ if the characters $\{ \lambda \}_{\lambda \in \Lambda}$
form an orthonormal basis in $L^2(E)$. The set $E$ is then called
a \emph{spectral set} of $G$. We say that $E,\Lambda$ are a \emph{spectral pair}.
\end{definition}
Fuglede's conjecture, also known as the {\em spectral set conjecture},
suggests that there is a connection between tilings and spectral sets.
\begin{conjecture}[Fuglede \cite{fuglede}]
A set $E \subset G$ is spectral if and only if it tiles $G$ with some set of translates.
\end{conjecture}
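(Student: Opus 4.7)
The plan is to translate both the spectral and the tiling properties into equivalent Fourier-analytic conditions on $E$ and then attempt to pass between them. For a finite abelian group $G$, Plancherel's formula shows that $E+T$ tiles $G$ at level $1$ iff $|E|\cdot|T|=|G|$ and $\widehat{\one_E}(\chi)\cdot\widehat{\one_T}(\chi) = 0$ for every non-trivial $\chi\in\widehat{G}$, equivalently $\supp(\widehat{\one_T})\setminus\{0\}\subseteq Z(\widehat{\one_E})$, where $Z(\cdot)$ denotes the zero set. In the same spirit, $\Lambda\subseteq\widehat{G}$ is a spectrum of $E$ iff $|\Lambda|=|E|$ and $\widehat{\one_E}(\lambda-\lambda')=0$ for all distinct $\lambda,\lambda'\in\Lambda$, i.e.\ $(\Lambda-\Lambda)\setminus\{0\}\subseteq Z(\widehat{\one_E})$. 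The conjecture thus reduces to a combinatorial statement about $Z(\widehat{\one_E})$: the existence of $T\subseteq G$ of size $|G|/|E|$ with $\supp(\widehat{\one_T})\subseteq Z(\widehat{\one_E})\cup\{0\}$ is equivalent to the existence of $\Lambda\subseteq\widehat{G}$ of size $|E|$ with $(\Lambda-\Lambda)\setminus\{0\}\subseteq Z(\widehat{\one_E})$.

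For the ``tiling $\Rightarrow$ spectral'' direction I would first try to build the spectrum directly from $T$. When $T$ is a subgroup this is standard (take $\Lambda = T^{\perp}$), and more generally one attempts to locate a set $\Lambda\subseteq\widehat{G}$ of the right cardinality inside the support of $\widehat{\one_T}$, using the identity $|\widehat{\one_T}|^2 = \widehat{\one_T * \one_{-T}}$ to control the arithmetic of its difference set. For the converse, given a spectrum $\Lambda$, one attempts the dual construction: examine the indicator $\one_\Lambda$ and hope that a suitable transform of it produces a $0/1$-valued function whose support can serve as $T$. In both directions the case where $|E|$ is small or has a highly structured factorisation should be attacked first, since then $Z(\widehat{\one_E})$ inherits a tractable description.

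The central obstacle is that, in general, there is no canonical passage between ``small Fourier support'' and ``small difference set''; these are two genuinely different combinatorial conditions on subsets of $\widehat{G}$, and they need not cohabit $Z(\widehat{\one_E})$ in any matching way. Indeed, Fuglede's conjecture is known to be \emph{false} in both directions for $\RR^d$ with $d$ sufficiently large, and analogous counterexamples exist in finite abelian groups, so one cannot hope for a fully general proof by pure Fourier-analytic manipulation. The only route to a positive result therefore goes through exploiting additional structure of $E$: in the setting of this paper, the fact that $E$ is a ``cube'' in a product $\ZZ_{n_1}\times\cdots\times\ZZ_{n_d}$ makes $Z(\widehat{\one_E})$ factor across coordinates as a union of fibers, reducing both the tile-to-spectrum and the spectrum-to-tile constructions to essentially one-dimensional statements about each cyclic factor, which is where I would focus the detailed work.
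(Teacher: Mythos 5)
The statement you were asked about is Fuglede's conjecture itself, which the paper states as a \emph{conjecture} and does not prove; indeed, as the paper notes in the surrounding discussion, it is known to be false in both directions for $\RR^d$, $d\ge 3$, and in various finite abelian groups. Your proposal correctly recognizes this: you do not claim a proof, you explain why no general Fourier-analytic proof can exist, and you identify that the only viable results are for structured sets such as the cubes treated in the paper's Theorem~1. Your two reformulations are accurate and coincide with the paper's own setup: the spectral condition $(\Lambda-\Lambda)\setminus\{0\}\subseteq Z(\widehat{\one_E})$ together with $|\Lambda|=|E|$ is exactly the combination of \eqref{eq:intersec} with the completeness count, and the tiling condition $\supp(\widehat{\one_T})\setminus\{0\}\subseteq Z(\widehat{\one_E})$ with $|E||T|=|G|$ is the Fourier side of $\one_E*\one_T=1$ used in the paper's proof of Theorem~\ref{thm:disccubes}. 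Your closing observation --- that for a cube $Z(\widehat{\one_E})$ factors across coordinates as a union of fibers, reducing everything to one-dimensional statements --- is precisely the content of Lemma~\ref{lm:fourier-zeros} and the mechanism of the paper's proof of its main theorem. In short: there is no proof to compare against because the statement is an open (and in general false) conjecture, and your assessment of the situation agrees with the paper's.
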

Fuglede's conjecture has motivated research on spectral sets for decades.
It is now known to be false in both directions when $G= \RR^d$, for $d \geq 3$ (see \cite{T, M, KM1, KM2, FMM, FR}),
but the conjecture remains open in several interesting groups.
Certain positive results also exist.
For instance, the conjecture is true for unions of two intervals
in $\RR$ \cite{laba}, and for convex domains in $\RR^2$ \cite{iosevich}.
Recently it was also established that the conjecture holds in
$G = \ZZ_p \times \ZZ_p$ for any prime $p$ \cite{mayeli}.

We will focus on the case when $G$ is a finite abelian group;
that is a finite direct product of finite cyclic groups.
Recall that every finite cyclic group of order $N$ is isomorphic to $\ZZ_N = \ZZ / (N\ZZ)$,
the additive group of residues $\modu N$.
The dual group $\widehat{\ZZ_N}$ of $\ZZ_N$ is the collection of characters $\{ e_n \}$, where 
\begin{equation*}
e_n(x) = \exp 2\pi i n x / N ,
\end{equation*}
for $n=0,\dots, N-1$.
We thus identify $\widehat{\ZZ_N}$ with $\ZZ_N$ in the natural way.
For a function $f : \ZZ_N \mapsto \CC$, we define its Fourier transform $\widehat{f}$ as
\begin{equation*}
\widehat{f} (x) = \sum_{k=0}^{N-1} f(k) e^{-2 \pi i x k/N} .
\end{equation*}

Now suppose that $\Lambda \subseteq \widehat{\ZZ_N} \simeq \ZZ_N$ is a spectrum of $E \subseteq \ZZ_N$.
In finite groups, the spectral relation is symmetric, so, equivalently, $E$ is a spectrum of $\Lambda$.
It is not difficult to show (see, for instance, \cite{KM2}) that the orthogonality of the set of exponentials
$\{ e_{\lambda} \, : \, \lambda \in \Lambda \}$ is equivalent to the condition
\begin{equation*}
\sum_{\lambda \in \Lambda} \left| \widehat{\one_E} \right|^2 (x-\lambda) \leq |E|^2, \quad \text{ for all } x \in  \ZZ_N,
\end{equation*}
where $|E|$ denotes the size of $E$. Moreover, the orthogonality is also equivalent to the condition
\beql{eq:intersec}
\Lambda - \Lambda  \subseteq \{ 0 \} \cup \left\{ \widehat{\one_E} = 0 \right\} .
\eeq
The orthogonality and completeness of the set $\{ e_{\lambda} \, : \, \lambda \in \Lambda \}$
is equivalent to the tiling condition
\beql{eq:speccond}
\sum_{\lambda \in \Lambda} \left| \widehat{\one_E} \right|^2 (x-\lambda) = |E|^2, \quad \text{ for all } x \in  \ZZ_N. 
\eeq
In other words, $\Lambda$ is a spectrum of $E$ if and only if
$|\widehat{\one_E}|^2+ \Lambda$ is a tiling of $\ZZ_N$ at level $|E|^2$.

It is obvious that in $\RR^d$, every cube $Q$ is both a spectral set and a tiling set (a spectrum of $[0,1]^d$, for instance,
is $\ZZ^d$).
Hence, Fuglede's conjecture is trivially true in this special case.
Moreover, the spectra of cubes in $\RR^d$ have been characterized, at least to the extent
that tiling complements of the cube are known.
\begin{othm}{\rm (\cite{JP, IP, K00, LRW})}\label{thm:cubes}
Let $\Lambda$ be a subset of $\RR^d$.
Then $\Lambda$ is a spectrum for the unit cube $Q=[0,1]^d$ if and only if $Q + \Lambda$ tiles $\RR^d$ at level 1.
\end{othm}
We remark that when a domain scales then its tiling complements scale in the same way while its spectra scale reciprocally.
Thus, a corollary of Theorem \ref{thm:cubes} is that the spectra of the rectangle
$$
R = [0, a_1] \times \cdots \times [0, a_d] \subseteq \RR^d
$$
are precisely the tiling complements of the ``dual'' rectangle
$$
R^* = \left[0, \frac{1}{a_1}\right] \times \cdots \times \left[0, \frac{1}{a_d}\right],
$$ 
and one can also make a more general statement about the spectra of linear images of the cube (parallelepipeds).

In this paper we consider the analogous problem of characterizing the spectra of discrete cubes in products of finite cyclic groups.
Let $A_1, \ldots , A_N$ be positive integers, and write
$$
G = \ZZ_{A_1} \times \cdots \times \ZZ_{A_N},
$$
from which we also obtain the isomorphism
$$
\ft{G} \simeq G = \ZZ_{A_1} \times \cdots \times \ZZ_{A_N}.
$$
If $a \geq 1$ is an integer, we write
\begin{equation*}
[a] = \{ 0, 1, 2, \ldots , a-1 \},
\end{equation*} 
and we define the \emph{cube} (in $G$)
\begin{equation*}
Q_{a_1, \ldots , a_N} = [a_1] \times [a_2] \times \cdots \times [a_N] , 
\end{equation*}
as well as its \emph{dual} cube (in $\ft{G}$)
\begin{equation*}
Q_{a_1, \ldots , a_N}^* = Q_{A_1/a_1 , \ldots , A_N/a_N}
\end{equation*}
whenever $a_1, \ldots , a_N$ divide $A_1, \ldots A_N$, respectively. Our main result is a characterization of the spectra of such discrete cubes, analogous to the one valid for cubes in $\RR^d$.
\begin{theorem}
\label{thm:disccubes}
Consider the cube $Q_{a_1, \ldots a_N}$ in $G = \ZZ_{A_1} \times \cdots \times \ZZ_{A_N}$. The condition
\beql{divisibility}
a_1 \mid A_1, \ldots , a_N \mid A_N
\eeq
is necessary and sufficient for $Q_{a_1, \ldots a_N}$ to be a tile and also for it to be spectral.

Suppose that \eqref{divisibility} holds and let  $\Lambda \subseteq G$.
Then $\Lambda$ is a tiling complement of the cube $Q_{a_1, \ldots a_N}$
if and only if $\Lambda$ is a spectrum of the dual cube $Q_{a_1, \ldots a_N}^*$.
\end{theorem}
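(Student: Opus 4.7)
The approach rests on one elementary one-dimensional lemma: if $c \mid A$, then the subsets $c\ZZ_A \setminus \{0\}$ and $[c] - [c] \pmod A$ are disjoint in $\ZZ_A$. Indeed, $[c] - [c]$ is the residue set $\{-(c-1),\ldots,c-1\} \pmod A$, none of whose nonzero elements is a multiple of $c$ when $c \leq A/2$ (the cases $c = A$ or $c = 1$ are immediate). I will also use the standard identity $Z(\widehat{\one_{[c]}}) = (A/c)\ZZ_A \setminus \{0\}$, from the geometric-series formula for $\widehat{\one_{[c]}}(\xi)$.

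For the first part of the theorem (divisibility), sufficiency is immediate: $\prod_i a_i\ZZ_{A_i}$ is a tiling complement of $Q$ and $\prod_i (A_i/a_i)\ZZ_{A_i}$ is a spectrum. For necessity in the tile case I would induct on $N$: for $N = 1$ the equation $a_1 |\Lambda| = A_1$ forces $a_1 \mid A_1$, and for $N \geq 2$, fixing the last coordinate $y_N$ gives a slab tiled at level one by the $(N-1)$-dimensional cube $[a_1] \times \cdots \times [a_{N-1}]$; the induction together with a coordinate permutation yields $a_i \mid A_i$ for every $i$. Spectral necessity follows by a parallel projection argument.

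For the main equivalence, both conditions impose $|\Lambda| = |Q^*| = |G|/|Q|$, so I need only compare their structural parts. Spectrum of $Q^*$ $\Rightarrow$ tiling complement of $Q$: the orthogonality condition \eqref{eq:intersec} together with the product $\widehat{\one_{Q^*}}(\xi) = \prod_i \widehat{\one_{[A_i/a_i]}}(\xi_i)$ supplies, for every nonzero $\mu \in \Lambda - \Lambda$, an index $i$ with $\mu_i \in a_i\ZZ_{A_i} \setminus \{0\}$; the 1D lemma applied with $c = a_i$ then gives $\mu_i \notin [a_i] - [a_i]$, so $\mu \notin Q - Q$. This is the packing condition, which the cardinality constraint upgrades to tiling. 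Conversely, for tiling of $Q$ $\Rightarrow$ spectrum of $Q^*$: the 1D lemma with $c = A_i/a_i$ gives $([A_i/a_i] - [A_i/a_i]) \cap Z(\widehat{\one_{[a_i]}}) = \emptyset$, hence, taking products, $Q^* - Q^* \subseteq \{\xi : \widehat{\one_Q}(\xi) \neq 0\}$. The Fourier tiling condition $\widehat{\one_Q}(\xi)\widehat{\one_\Lambda}(\xi) = 0$ for $\xi \neq 0$ then forces $\widehat{\one_\Lambda}$ to vanish on this larger set, hence on $(Q^* - Q^*) \setminus \{0\}$, which is exactly the Fourier-inverted form of \eqref{eq:speccond}.

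The main technical obstacle is the slab-induction for the necessity of divisibility in several dimensions; the central equivalence itself, once the 1D lemma is noted, reduces to two transparent coordinate-wise applications.
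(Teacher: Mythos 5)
Your handling of the main equivalence is correct and is essentially the paper's own argument: the same computation of $Z(\ft{\one_{[c]}})$ in one dimension, the same disjointness of that zero set from $[c]-[c]$, the packing-plus-cardinality upgrade in one direction and the Fourier-support argument in the other. Your slab induction for the necessity of $a_i \mid A_i$ when $Q$ tiles is also fine: a level-one tiling restricted to a hyperplane $x_N=\mathrm{const}$ genuinely partitions that slab into translates of the lower-dimensional cube, since each translate of $Q$ meets the slab either trivially or in a full such translate.

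The gap is the sentence ``spectral necessity follows by a parallel projection argument.'' There is no such parallel. A spectrum of $Q$ does not restrict or project to a spectrum of a factor $[a_i]$: two spectrum points may share their $i$-th coordinate and be separated only in another coordinate, so the projection of $\Lambda$ onto the $i$-th coordinate is neither injective nor of the right cardinality, and the fiber-counting bounds one does get (e.g.\ $ab \le B\,(A,a)$ in two dimensions) are too weak to force divisibility. Whether a factor of a spectral product set must itself be spectral is a genuinely nontrivial question in general, not a one-line projection. The paper's device is to run your packing argument with the cube $D=\prod_i\bigl[A_i/(A_i,a_i)\bigr]$, which is defined \emph{without} assuming divisibility: by \eqref{eq:intersec} and Lemma \ref{lm:fourier-zeros}, $(\Lambda-\Lambda)\setminus\Set{0}$ misses $D-D$, so $D+\Lambda$ packs $G$, giving
\begin{equation*}
\prod_i\frac{A_i}{(A_i,a_i)}\cdot\prod_i a_i\ \le\ \prod_i A_i,
\qquad\text{i.e.}\qquad \prod_i\frac{a_i}{(A_i,a_i)}\le 1 .
\end{equation*}
Each factor on the left is at least $1$, so $a_i=(A_i,a_i)$, which is exactly $a_i\mid A_i$; and since the inequality is then an equality, the packing is a tiling by the dual cube, so this one computation delivers both the spectral necessity of \eqref{divisibility} and one direction of the main equivalence. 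Replace the projection remark with this argument; the rest of your proposal stands.
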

We see that whereas any cube in $\RR^d$ both tiles and has a spectrum, this is not the case for discrete cubes in $G$, where both properties rest on the condition $a_1 \mid A_1, \ldots , a_N \mid A_N$. Accordingly, Fuglede's conjecture holds for discrete cubes in $G$. This is not difficult to show. The main content of Theorem \ref{thm:disccubes} is the identification of tiling complements of the dual cube with the spectra of the cube.

\begin{observation}\label{obs:subgroup}
Suppose $E \subseteq H \subseteq G$, where $H$ is a subgroup of the finite group $G$. Then
$$
E \text{ tiles } G \Longleftrightarrow E \text{ tiles } H,
$$
and
$$
E \text{ is spectral in } G \Longleftrightarrow E \text{ is spectral in } H.
$$
\end{observation}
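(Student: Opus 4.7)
My plan is to handle the tiling and spectral equivalences separately, exploiting the fact that since $E\subseteq H$, every translate $E+g$ lies in a single coset of $H$, and that every character of $G$ restricts to a character of $H$ (with the restriction map $\widehat G\to\widehat H$ being surjective with kernel $H^\perp=\{\chi\in\widehat G:\chi|_H\equiv 1\}$).

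For the tiling equivalence, I would first observe that if $T\subseteq H$ is a tiling complement of $E$ in $H$ and $R\subseteq G$ is any set of coset representatives of $H$ in $G$, then $T+R$ is a tiling complement of $E$ in $G$, because $G$ is the disjoint union of the cosets $H+r$ and each coset is tiled by $E+T+r$. Conversely, if $T\subseteq G$ tiles $E$ in $G$, then for any $t\in T$ the translate $E+t$ lies in the single coset $H+t$; in particular $E+t\subseteq H$ iff $t\in H$, so $T\cap H$ tiles $E$ in $H$.

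For the spectral equivalence, the key point is that if $\chi,\chi'\in\widehat G$ agree on $H$, then $\chi\overline{\chi'}\equiv 1$ on $H\supseteq E$, so $\langle\chi,\chi'\rangle_{L^2(E)}=|E|\neq 0$. If $\Lambda\subseteq\widehat G$ is a spectrum of $E$ in $G$, this forces the restrictions $\lambda|_H$ for $\lambda\in\Lambda$ to be pairwise distinct, giving $|\Lambda|=|E|$ orthogonal (hence, by dimension count, complete) characters in $\widehat H$, so $E$ is spectral in $H$. Conversely, if $\Lambda\subseteq\widehat H$ is a spectrum of $E$ in $H$, choose for each $\lambda\in\Lambda$ a lift $\widetilde\lambda\in\widehat G$ with $\widetilde\lambda|_H=\lambda$ (using surjectivity of the restriction map). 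These lifts agree with $\lambda$ on $E$, so the orthonormal basis property in $L^2(E)$ passes unchanged to $\widehat G$, and $\{\widetilde\lambda:\lambda\in\Lambda\}$ is a spectrum of $E$ in $G$.

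The only slightly delicate ingredient is the distinctness of restrictions in the forward spectral direction, which is what forces the use of the orthogonality condition rather than purely dimensional considerations; everything else is a straightforward bookkeeping with cosets and character restriction.
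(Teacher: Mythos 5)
Your proof is correct and follows essentially the same route as the paper: for tiling, translates of $E$ stay inside single cosets of $H$, so one restricts to $H$ or replicates across cosets; for spectrality, one restricts characters of $G$ to $H$ and extends characters of $H$ to $G$. The one place you are more careful than the paper's one-line argument is in noting that distinct characters in a spectrum of $E$ in $G$ must have distinct restrictions to $H$ (since characters agreeing on $H\supseteq E$ have inner product $|E|\neq 0$ over $E$), a detail the paper leaves implicit.
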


Indeed if $E$ tiles $G$ then its translates are completely contained in cosets of $H$, therefore $H$ is tiled itself by
copies of $E$. Conversely, if $E$ tiles $H$ then one only has to copy this tiling in every coset of $H$ in order to obtain
a tiling of $G$.

To see the corresponding equivalence for spectrality assume that $E$ is spectral in $G$. Since any character of $G$
is also a character of $H$, when restricted to $H$, it follows that $E$ is spectral in $H$. And if $E$ is spectral in $H$
then it is also spectral in $G$ as every character of $H$ can be extended to a character of $G$.

Because of Observation \ref{obs:subgroup}, when studying the tiling or spectral properties of $E \subseteq G$
we may always view $E$ as a subset of the group it generates, $\gen{E}$, and decide the question in this setting.
We obtain thus Corollary \ref{cor:spaced} below for ``dilations'' of the cubes,
thus establishing the Fuglede Conjecture for the more general class of sets of type \eqref{spaced}.

\begin{corollary}\label{cor:spaced}
Suppose
\beql{spaced}
E = s_1[k_1] \times s_2[k_2] \times \cdots \times s_N[k_N] \subseteq \ZZ_{A_1} \times \cdots \times \ZZ_{A_N},
\eeq
where
$$
s[k] = s\Set{0, 1, \ldots, k-1} = \Set{0, s, 2s, \ldots, (k-1)s},
$$
and we are assuming that all points in $s_j[k_j]$ are distinct $\bmod\ A_j$, $j=1,2,\ldots,N$.
Write $A_j = A_j' (A_j, s_j)$ and $s_j = s_j' (A_j, s_j)$.

Then $E$ is spectral if and only if it is a tile,
and this happens exactly when
$$
k_j \mid A_j',\ \ \ j=1,2,\ldots,N.
$$
Furthermore, the set
$$
\Lambda \subseteq \ZZ_{A_1} \times \cdots \times \ZZ_{A_N}
$$
is a spectrum for $E$ if and only if the set
$$
\tilde{\Lambda} = \Set{ (s_1' \lambda_1 \bmod A_1', \ldots, s_N' \lambda_N \bmod A_N'):\ (\lambda_1,\ldots,\lambda_N) \in \Lambda}
$$
is a tiling complement of the cube
$$
\tilde Q = [A_1'/k_1]\times\cdots\times[A_N'/k_N]
$$
in the group $\ZZ_{A_1'} \times\cdots\times \ZZ_{A_N'}$.

\end{corollary}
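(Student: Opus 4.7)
The plan is to use Observation \ref{obs:subgroup} to pass from the ambient group $G$ to the subgroup $H=\langle E\rangle$ generated by $E$, to identify $(H,E)$ with a genuine cube inside a smaller product of cyclic groups, and to invoke Theorem \ref{thm:disccubes} both for the tile/spectral characterization of $E$ and for the spectrum--tiling-complement correspondence.

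First I would compute $H$ coordinate-wise. Since $\langle s_j\rangle\subseteq\ZZ_{A_j}$ has order $A_j/(A_j,s_j)=A_j'$, the map $\phi_j:\ZZ_{A_j'}\to\langle s_j\rangle$ defined by $\phi_j(m)=m s_j\bmod A_j$ is a group isomorphism; taking products, $\phi=\phi_1\times\cdots\times\phi_N$ gives an isomorphism $\ZZ_{A_1'}\times\cdots\times\ZZ_{A_N'}\to H$. The hypothesis that the elements of $s_j[k_j]$ are distinct mod $A_j$ is equivalent to $k_j\le A_j'$, and in that range $\phi^{-1}(E)=[k_1]\times\cdots\times[k_N]$, a genuine discrete cube in the sense of Theorem \ref{thm:disccubes}. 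Combining Observation \ref{obs:subgroup} with the first half of Theorem \ref{thm:disccubes} applied to this cube then shows that $E$ is a tile---equivalently, is spectral---if and only if $k_j\mid A_j'$ for every $j$.

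For the spectra-to-tiling correspondence, I would track how characters transform under $\phi$. Using $s_j/A_j=s_j'/A_j'$, one computes
$$
\exp(2\pi i \lambda_j m s_j/A_j)=\exp\bigl(2\pi i (s_j'\lambda_j)m/A_j'\bigr),
$$
so that the character of $G$ indexed by $\lambda=(\lambda_1,\ldots,\lambda_N)$, restricted to $H$ and pulled back through $\phi$, is precisely the character of $\ZZ_{A_1'}\times\cdots\times\ZZ_{A_N'}$ indexed by $\tilde\lambda=(s_1'\lambda_1\bmod A_1',\ldots,s_N'\lambda_N\bmod A_N')$. The map $\Lambda\mapsto\tilde\Lambda$ is therefore exactly the dual of the inclusion $H\hookrightarrow G$. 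By Observation \ref{obs:subgroup}, $\Lambda$ is a spectrum of $E$ in $G$ if and only if $\tilde\Lambda$ is a spectrum of the cube $[k_1]\times\cdots\times[k_N]$ in $\ZZ_{A_1'}\times\cdots\times\ZZ_{A_N'}$, and the second half of Theorem \ref{thm:disccubes} converts the latter into the assertion that $\tilde\Lambda$ is a tiling complement of the dual cube $\tilde Q=[A_1'/k_1]\times\cdots\times[A_N'/k_N]$.

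The only point requiring a moment's thought, but not a genuine obstacle, is the injectivity of $\Lambda\to\tilde\Lambda$: two elements of $\Lambda$ with the same image induce identical characters on $\langle E\rangle$ and hence on $E$, breaking orthogonality in $L^2(E)$; on the other hand, any tiling complement of $\tilde Q$ has cardinality $k_1\cdots k_N=|E|$. So whichever side of the equivalence is assumed, the map $\Lambda\to\tilde\Lambda$ is automatically bijective of cardinality $|E|$, and the two-step reduction through $H$ runs cleanly in both directions.
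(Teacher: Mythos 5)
Your proof is correct and follows essentially the same route as the paper: reduce to the subgroup $\langle E\rangle\simeq\ZZ_{A_1'}\times\cdots\times\ZZ_{A_N'}$ via Observation \ref{obs:subgroup}, identify $E$ with the cube $[k_1]\times\cdots\times[k_N]$ there, and apply both halves of Theorem \ref{thm:disccubes} after checking that the character $(\lambda_1,\ldots,\lambda_N)$ restricts to $(s_1'\lambda_1,\ldots,s_N'\lambda_N)$. Your closing remark on the injectivity of $\Lambda\mapsto\tilde\Lambda$ addresses a point the paper leaves implicit.
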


The proofs of Theorem \ref{thm:disccubes} and Corollary \ref{cor:spaced} are given in \S\ref{sec:proofs}.

\section{Proofs}\label{sec:proofs}
The proof of Theorem \ref{thm:disccubes} is essentially the same
regardless of the number $N$ of finite group factors in the product group $G$.
We therefore prove Theorem \ref{thm:disccubes} in the special case
when $G= \ZZ_A \times \ZZ_B$ and $Q_{a,b} = [a] \times [b]$.

We will need the following lemma.
\begin{lemma}\label{lm:fourier-zeros}
Let $f$ be the indicator function of $Q_{a,b} \subseteq G=\ZZ_A \times \ZZ_B$.
Then if $Z(\ft{f} )$ is the set of zeros of the Fourier Transform of $f$ in $\ft{G} \simeq G$ we have
\beql{eq:zeros}
Z(\ft{f}) = \Set{(j,k) \neq (0,0):\ \frac{A}{(A,a)}\mid j \, \mbox{ or } \, \frac{B}{(B,b)}\mid k}.
\eeq
Note also that $Z(\ft{f})$ does not intersect the difference set 
\beql{diff-set}
Q_{\frac{A}{(A,a)}, \frac{B}{(B,b)} } - Q_{\frac{A}{(A,a)}, \frac{B}{(B,b)}}.
\eeq
\end{lemma}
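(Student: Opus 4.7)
The plan is to exploit the product structure of the cube. Since $Q_{a,b} = [a] \times [b]$, the indicator $f = \one_{[a]} \otimes \one_{[b]}$ is a tensor product, so its Fourier transform on $\ZZ_A \times \ZZ_B$ factors as $\ft{f}(j,k) = g(j)\,h(k)$, where
$$
g(j) = \sum_{x=0}^{a-1} e^{-2\pi i j x/A}, \qquad h(k) = \sum_{y=0}^{b-1} e^{-2\pi i k y/B}
$$
are the usual one-dimensional exponential sums. Thus $Z(\ft{f})$ is the union $(Z(g)\times\ZZ_B)\cup(\ZZ_A\times Z(h))$ minus the origin (where $\ft{f}(0,0)=ab\ne 0$), so the task reduces to describing $Z(g)$ and $Z(h)$ individually.

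For the one-dimensional zero set I would write $g(j) = (1-e^{-2\pi i j a/A})/(1-e^{-2\pi i j/A})$ for $j\not\equiv 0\pmod{A}$, observe that the denominator is nonzero on this range, and reduce the vanishing of the numerator to $A\mid j a$. Since $\gcd(A/(A,a),\ a/(A,a)) = 1$, this divisibility is equivalent to $\tfrac{A}{(A,a)}\mid j$, so the zero set of $g$ in $\ZZ_A$ is exactly the set of nonzero multiples of $\tfrac{A}{(A,a)}$. The analogous statement for $h$ then combines with the factorization above to yield \eqref{eq:zeros}.

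For the second assertion, the key point is a comparison of residue intervals, and this is the only place where one has to be slightly careful. The difference set of $[A/(A,a)]$ in $\ZZ_A$ is contained in the symmetric set $\{0,\pm 1,\ldots,\pm(\tfrac{A}{(A,a)}-1)\}$, while the nonzero multiples of $\tfrac{A}{(A,a)}$ in $\ZZ_A$ are $\tfrac{A}{(A,a)}, 2\tfrac{A}{(A,a)}, \ldots, A-\tfrac{A}{(A,a)}$, each of absolute value at least $\tfrac{A}{(A,a)}$; hence these two sets meet only at $0$. The same comparison works in the second coordinate, so a nonzero element of \eqref{diff-set} cannot satisfy either divisibility condition in \eqref{eq:zeros} and therefore lies outside $Z(\ft{f})$. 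Everything else is mechanical Fourier computation.
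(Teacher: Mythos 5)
Your proof is correct and follows essentially the same route as the paper: factor $\ft{f}(j,k)$ as a product of two one-dimensional geometric sums, identify the zeros of each factor as the nonzero multiples of $A/(A,a)$ (resp.\ $B/(B,b)$) via the gcd reduction $A \mid ja \Leftrightarrow \frac{A}{(A,a)} \mid j$, and observe that the symmetric interval $\Set{-(\frac{A}{(A,a)}-1),\ldots,\frac{A}{(A,a)}-1}$ contains no nonzero multiple of $\frac{A}{(A,a)}$. The paper asserts these steps without writing out the geometric-sum, gcd, and residue-range computations, so your version simply supplies the details the authors leave to the reader.
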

\begin{proof}
We have that 
\begin{equation*}
\ft{f}(j,k) = \ft{\one_{[a]}}(j) \cdot \ft{\one_{[b]}}(k) , 
\end{equation*}
where the indicator functions $\one_{[a]}$ and $\one_{[b]}$ are defined on the groups $\ZZ_A$ and $\ZZ_B$, respectively. Hence, $\ft{f}$ vanishes if and only if either $\ft{\one_{[a]}}$ or $\ft{\one_{[b]}}$ is zero. This gives the conditions in \eqref{eq:zeros}. 

The set in \eqref{diff-set} is the cube
\beql{2cube}
\Set{-\left(\frac{A}{(A,a)}-1\right),\ldots,\frac{A}{(A,a)}-1} \times
\Set{-\left(\frac{B}{(B,b)}-1\right),\ldots,\frac{B}{(B,b)}-1} ,
\eeq
which clearly does not intersect $Z(\ft{f})$.
\end{proof}

\begin{proof}[Proof of Theorem \ref{thm:disccubes}]
Notice first that \eqref{divisibility} is obviously
necessary and sufficient for $Q_{a,b}$ to be a tile.
Moreover, it is clear that \eqref{divisibility} is sufficient for $Q_{a,b}$ to be spectral, as 
$$
\Set{(x,y) \in \ZZ_A \times \ZZ_B \, : \, \frac{A}{a} \mid x , \, \frac{B}{b} \mid y}
$$
is then one possible spectrum of $Q_{a,b}$. We will see below that \eqref{divisibility} is also a necessary condition for spectrality.

Suppose now that $Q_{a,b}$ has $\Lambda$ as a spectrum.
Write $f$ for the indicator function of $Q_{a,b}$ and observe that
$\Lambda-\Lambda \setminus\Set{0}$ does not intersect the difference set of
$$
Q_{\frac{A}{(A,a)}, \frac{B}{(B,b)}}
$$
according to \eqref{eq:intersec} and Lemma \ref{lm:fourier-zeros}.
Hence $Q_{\frac{A}{(A,a)}, \frac{B}{(B,b)}} + \Lambda$ is a packing in $G$, so that
$$
\Abs{Q_{\frac{A}{(A,a)}, \frac{B}{(B,b)}}} \cdot \Abs{\Lambda} \le \Abs{G}.
$$
Since $\Lambda$ is a spectrum of $Q_{a,b}$ it follows that $\Abs{\Lambda} = \Abs{Q_{a,b}}$, so the above
inequality reads
\beql{packing}
\frac{A}{(A,a)} \frac{B}{(B,b)} a b \le AB.
\eeq
The only way this can happen is if it is an equality (as $a/(A,a) \ge 1, b/(B,b) \ge 1$)
and this implies $a \mid A$ and $b \mid B$. The dual cube is defined in this case,
and since the inequality in \eqref{packing} is actually an equality it follows
that the packing $Q_{\frac{A}{(A,a)}, \frac{B}{(B,b)}} + \Lambda$ is in fact a tiling of $G$, as we had to show.
We have shown that if $\Lambda$ is a spectrum of $Q_{a,b}$ then \eqref{divisibility} holds and $\Lambda$ is a tiling
complement of the dual cube.

For the converse suppose that $a \mid A$ and $b \mid B$,
so that the dual cube $Q_{a, b}^*$ of $Q_{a, b}$ exists,
and suppose also that $Q_{a, b}^* + \Lambda$ is a tiling of $\ft{G} \simeq G$.
Taking Fourier Transforms on the tiling condition
$$
\one_{Q_{a, b}^*} * \one_\Lambda = 1,
$$
we get that
$$
\ft{\one_{Q_{a, b}^*}} \cdot \ft{\one_\Lambda} = A B \, \one_{\Set{0}} ,
$$
which implies that $\ft{\one_\Lambda}$ is supported on the set $\Set{\ft{\one_{Q_{a, b}^*}} = 0} \cup \Set{0}$,
and, according to Lemma \ref{lm:fourier-zeros}, this latter set is contained in the complement of
\beql{2c}
\Set{-(a-1),\ldots,a-1}\times \Set{-(b-1),\ldots,(b-1)}.
\eeq
Thus $\ft{\one_\Lambda}$ is supported at $0$ plus the complement of the support of $\one_{Q_{a,b}} * \one_{-Q_{a,b}}$. We have that 
$$
\ft{\one_{\Lambda}} \cdot \left( \one_{Q_{a,b}} * \one_{-Q_{a,b}} \right) = |Q_{a,b}|^2 \one_{\{ 0 \}} ,
$$
and by taking the inverse Fourier Transform we get
$$
\one_{\Lambda} * \left| \ft{\one_{Q_{a,b}}} \right|^2 = |Q_{a,b}|^2 .
$$
Hence $|\ft{\one_{Q_{a,b}}}|^2 + \Lambda$ tiles $G$ at level $|Q_{a,b}|^2$, and by \eqref{eq:speccond} this is precisely what it means for $\Lambda$ to be a spectrum of $Q_{a,b}$.
\end{proof}

\begin{proof}[Proof of Corollary \ref{cor:spaced}]
If $s[k] \subseteq \ZZ_A$ then
\begin{align}
\gen{s[k]} &= \gen{s} \nonumber\\
  &= \Set{n s \bmod A: n\in\ZZ} \label{subgroup}\\
  &= \Set{0 \bmod A, s \bmod A, 2s \bmod A, \ldots, \left(\frac{A}{(A,s)}-1\right) s \bmod A} \nonumber\\
  &\simeq \ZZ_{A'},\nonumber
\end{align}
where $A'=A/(A,s)$.
It follows that
\beql{tmp-iso}
\gen{E} \simeq \ZZ_{A_1'} \times \cdots \times \ZZ_{A_N'},
\eeq
and, under the obvious isomorphism
$$
(0,\ldots,0,s_j,0,\ldots,0) \to (0,\ldots,0,1,0,\ldots,0)
$$
implied in \eqref{tmp-iso}, the image of $E$ is the cube
$$
Q = [k_1]\times\cdots\times [k_N].
$$
So, to decide if $E$ tiles the original group $\ZZ_{A_1}\times\cdots\times \ZZ_{A_N}$ or is spectral therein we can
equivalently answer the same question for $Q$ in the group \eqref{tmp-iso}.
According to Theorem \ref{thm:disccubes} the cube $Q$ tiles the group \eqref{tmp-iso} if and only if
$$
k_j \mid A_j' = \frac{A_j}{(A_j,s_j)},\ \ \text{ for } j=1,2,\ldots,N,
$$
and the same condition is equivalent to $Q$ being spectral in the same group.

If $(\lambda_1,\ldots,\lambda_N) \in \ZZ_{A_1}\times\cdots\times\ZZ_{A_N}$ is a character
on $\ZZ_{A_1}\times\cdots\times\ZZ_{A_N}$ then, restricted on the subgroup $\gen{E}$ viewed as in \eqref{subgroup},
it becomes the character
$$
(s_1'\lambda_1,\ldots,s_N'\lambda_N) \in \ZZ_{A_1'} \times \cdots \times \ZZ_{A_N'}.
$$
Therefore, for the collection of characters $\Lambda$ on the original group $\ZZ_{A_1}\times\cdots\times\ZZ_{A_N}$
to form a spectrum of $E$ it is necessary and sufficient
that the collection $\tilde\Lambda$ of characters on $\ZZ_{A_1'} \times \cdots \times \ZZ_{A_N'}$
form a spectrum of $Q$, and this is equivalent to $\tilde\Lambda$ being a tiling complement
of the dual cube of $Q$ in $\ZZ_{A_1'} \times \cdots \times \ZZ_{A_N'}$,
which is the cube
$$
[A_1'/k_1]\times\cdots\times[A_N'/k_N].
$$

\end{proof}

\end{document}